\renewcommand{\phi}{\varphi}
\renewcommand{\epsilon}{\varepsilon}
\renewcommand{\theta}{\vartheta}
\def\ZZ{{\mathbf Z}}
\def\AAA{{\mathbf A}}
\def\RR{{\mathbf R}}
\def\PP{{\mathbf P}}
\def\cJ{\mathcal{J}}
\def\cC{\mathcal{C}}
\def\cO{\mathcal{O}}
\def\cM{\mathcal{M}}
\def\fra{\mathfrak{a}}
\def\frb{\mathfrak{b}}
\def\frm{\mathfrak{m}}
 \DeclareMathOperator{\Spec}{Spec}
\newtheorem{lemma}{Lemma}[section]
\newtheorem{theorem}[lemma]{Theorem}
\newtheorem{proposition}[lemma]{Proposition}
\newtheorem{conjecture}[lemma]{Conjecture}
\theoremstyle{definition}
\newtheorem{remark}[lemma]{Remark}
\theoremstyle{remark}
\newtheorem*{remark*}{Remark}
\newtheorem*{note*}{Note}
\begin{document}

\title{Ordinary varieties and the comparison between multiplier ideals
and test ideals II}

\thanks{2000\,\emph{Mathematics Subject Classification}.
 Primary 13A35; Secondary 14F18, 14F30.
\newline The author was partially supported by
 NSF grant DMS-0758454 and
  a Packard Fellowship.}
\keywords{Test ideals, multiplier ideals, ordinary variety}

\author[M.~Musta\c{t}\u{a}]{Mircea~Musta\c{t}\u{a}}
\address{Department of Mathematics, University of Michigan,
Ann Arbor, MI 48109, USA}
\email{{mmustata@umich.edu}}

\begin{abstract}
We consider the following conjecture: if $X$ is a smooth $n$-dimensional projective variety 
in characteristic zero, then there is a dense set  of reductions $X_s$ to positive characteristic such that the action of the Frobenius morphism on $H^n(X_s,\cO_{X_s})$
is bijective. We also consider the conjecture relating the multiplier ideals of an ideal $\fra$ on a nonsingular variety in characteristic zero, and the test ideals of the reductions of $\fra$ to positive characteristic. We prove that
the latter conjecture implies the former one. 
\end{abstract}

\maketitle

\markboth{M.~MUSTA\c{T}\u{A}}{ORDINARY VARIETIES, MULTIPLIER IDEALS AND TEST IDEALS
II}

\section{Introduction}

This note is motivated by the joint paper with V.~Srinivas \cite{MS}, aimed at understanding 
the following conjecture relating invariants of singularities in characteristic zero with corresponding
invariants in positive characteristic. For a discussion of the notions involved, see below.

\begin{conjecture}\label{conj1}
Let $Y$ be a smooth, irreducible variety over an algebraically closed field $k$ of characteristic 
zero, and $\fra$ a nonzero ideal on $Y$. Given any model $Y_A$ and $\fra_A$ for $Y$ and $\fra$
over a subring $A$ of $k$, finitely generated over $\ZZ$, there is a dense set of closed points
$S\subset \Spec\,A$ such that
\begin{equation}\label{eq1_introd}
\cJ(Y,\fra^{\lambda})_s=\tau(Y_s,\fra_s^{\lambda})
\end{equation}
for every $\lambda\in\RR_{\geq 0}$ and every $s\in S$.
\end{conjecture}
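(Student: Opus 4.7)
The plan is to reduce both $\cJ(Y,\fra^\lambda)_s$ and $\tau(Y_s,\fra_s^\lambda)$ to pushforwards along a common log resolution, rewrite the non-trivial inclusion as surjectivity of a Cartier-type trace on the resolution, and then extract a dense set of good closed points by lifting the resolution modulo $p^2$ and invoking Deligne--Illusie.

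First, fix a log resolution $\pi\colon Y'\to Y$ of $(Y,\fra)$ with $\fra\cdot\cO_{Y'}=\cO_{Y'}(-F)$ and $F_{\mathrm{red}}+\mathrm{Exc}(\pi)$ a simple normal crossing divisor $E$. Set $D_\lambda:=K_{Y'/Y}-\lfloor\lambda F\rfloor$. In characteristic zero, local vanishing gives $\cJ(Y,\fra^\lambda)=\pi_*\cO_{Y'}(D_\lambda)$ and $R^i\pi_*\cO_{Y'}(D_\lambda)=0$ for $i>0$. Spread $Y$, $\fra$, $Y'$, $\pi$, $F$, $E$ out over a finitely generated subring $A\subseteq k$. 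After shrinking $\Spec\,A$, we may assume that $\pi_s$ remains a log resolution of $(Y_s,\fra_s)$ with the same combinatorics, that the local vanishing persists on each fiber, and that the formation of $\pi_*\cO_{Y'}(D_\lambda)$ commutes with base change. Therefore $\cJ(Y,\fra^\lambda)_s=\pi_{s,*}\cO_{Y'_s}(D_{\lambda,s})$ for every closed $s\in\Spec\,A$, and the Hara--Yoshida--Takagi construction of test ideals via log resolutions supplies the automatic inclusion $\tau(Y_s,\fra_s^\lambda)\subseteq\cJ(Y,\fra^\lambda)_s$ for every $s$ with $p_s:=\mathrm{char}\,k(s)>0$.

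The non-trivial direction is $\cJ(Y,\fra^\lambda)_s\subseteq\tau(Y_s,\fra_s^\lambda)$. Using iterated Grothendieck duality for $F_{Y'_s}$, the test ideal equals the image in $\cO_{Y_s}$ of the stable member of the descending chain
\[
\pi_{s,*}\,\mathrm{Tr}_e\!\left(F_*^e\,\cO_{Y'_s}\!\left(p_s^eD_{\lambda,s}-(p_s^e-1)K_{Y'_s}\right)\right),\qquad e\geq 0.
\]
Consequently, equality in \eqref{eq1_introd} is equivalent to the surjectivity, for some $e\geq 1$, of the Cartier-type trace
\[
C_e^{(\lambda)}\colon F_*^e\,\cO_{Y'_s}\!\left(p_s^eD_{\lambda,s}-(p_s^e-1)K_{Y'_s}\right)\longrightarrow\cO_{Y'_s}(D_{\lambda,s}),
\]
followed by $\pi_{s,*}$. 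Since both the multiplier ideal and the test ideal produce only finitely many distinct ideals as $\lambda$ ranges over a bounded interval (by rationality of jumping numbers and of $F$-jumping numbers with uniformly bounded denominator), it suffices to verify surjectivity for finitely many rational $\lambda$'s determined by the combinatorics of $(E,F)$.

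The main obstacle is the final step: producing a dense set of closed $s\in\Spec\,A$ at which $C_e^{(\lambda)}$ is surjective for some $e=e(s)$ and for each of those finitely many $\lambda$'s. The plan is to lift the SNC pair $(Y'_s,E_s)$, together with $\lfloor\lambda F_s\rfloor$ (which is a sum of components of $E_s$), to a smooth scheme over $W_2(k(s))$ and apply the Deligne--Illusie decomposition of $F_*\Omega^\bullet_{Y'_s}(\log E_s)$. Combined with asymptotic Serre vanishing on $Y'_s$ applied to a polarization brought along in the spread-out, this decomposition yields a splitting of the Cartier operator compatible with the twist by $\lfloor\lambda F_s\rfloor$, whence the required surjectivity of $C_e^{(\lambda)}$ at every $s$ at which the lift is unobstructed. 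Establishing density of such $s$ uniformly in the finite list of $\lambda$'s is the delicate step, and it is at this point that the characteristic-zero Hodge-theoretic input is actually consumed.
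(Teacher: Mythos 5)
The statement you are trying to prove is Conjecture~\ref{conj1}, which is \emph{open}; the paper does not prove it (it only proves that Conjecture~\ref{conj1} implies Conjecture~\ref{conj2}, the converse being the main result of \cite{MS}). So your proposal must contain a gap somewhere, and it does, exactly at the step you flag as ``delicate.'' Two points in the reduction are already not justified. First, the claim that it suffices to check surjectivity of $C_e^{(\lambda)}$ for finitely many rational $\lambda$ determined by the combinatorics of $(E,F)$ in characteristic zero is circular: the $F$-jumping numbers of $\fra_s$ are not known a priori to coincide with the jumping numbers of $\fra$, and proving that they do for $\lambda$ just below a characteristic-zero jumping number is precisely the content of the conjecture. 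The relevant exponents on the test-ideal side genuinely depend on $p$ (the Remark at the end of the paper isolates the essential case $\lambda=1-\tfrac{1}{p}$), so no finite list of $\lambda$'s independent of $s$ controls the problem. Second, the Deligne--Illusie plus asymptotic Serre vanishing argument (this is Hara's and Hara--Yoshida's proof of the inclusion $\cJ(Y,\fra^{\lambda})_s\subseteq\tau(Y_s,\fra_s^{\lambda})$ for \emph{fixed} $\lambda$) produces an open dense set of $s$ whose complement depends on $\lambda$: the bound on $p$ needed for the vanishing blows up as $\lambda$ approaches a jumping number from below, and as $e$ grows. The $W_2$-lifting is not the obstruction --- after spreading out, every closed fiber lifts --- the obstruction is that the required positivity is not uniform in $\lambda$.

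This non-uniformity is not a technicality one can engineer away by carrying along a polarization: the paper's own main theorem, combined with \cite{MS}, shows that Conjecture~\ref{conj1} is \emph{equivalent} to the bijectivity of Frobenius on $H^n(X_s,\cO_{X_s})$ for a dense set of $s$ (Conjecture~\ref{conj2}), an ordinarity-type statement of arithmetic nature that is wide open already for, say, a general hypersurface. Any argument of the shape you propose, which uses only geometric inputs available at every sufficiently general closed point, would prove the equality on an open dense set for all $\lambda$ simultaneously, and hence would prove ordinarity-type statements that are known not to follow from such considerations. What your write-up does correctly capture is the known partial result quoted in the Introduction: for each fixed $\lambda$, the equality (\ref{eq1_introd}) holds on an open dense set of closed points \cite{HY}. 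The passage from ``for each $\lambda$, a dense set $S_\lambda$'' to ``a single dense set $S$ for all $\lambda$'' is the entire difficulty, and your proposal does not address it.
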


In the conjecture, we denote by $Y_s$ the fiber of $Y_A$ over $s\in S$, and $\fra_s$ is the ideal
on $Y_s$ induced by $\fra_A$. The ideals $\cJ(Y,\fra^{\lambda})$ are the multiplier ideals of 
$\fra$. These are fundamental invariants of the singularities of $\fra$, that have seen a lot of recent applications due to their appearance in vanishing theorems (see \cite[Chapter 9]{positivity}).
The ideals $\tau(Y_s,\fra_s^{\lambda})$ are the (generalized) test ideals
of Hara and Yoshida \cite{HY}, defined in positive characteristic using  the
Frobenius morphism. The above conjecture asserts therefore that for a dense set of closed points,
we have the equality between the test ideals of $\fra$ and the reductions of the multiplier ideals of
$\fra$ for \emph{all} exponents. We note that it is shown in \cite{HY} that if $\lambda\in\RR_{\geq 0}$
is fixed, then the equality in (\ref{eq1_introd}) holds for every $s$ in an open subset of the closed points in $\Spec\,A$.

The following conjecture was proposed in \cite{MS}. 

\begin{conjecture}\label{conj2}
Let $X$ be a smooth, irreducible  $n$-dimensional projective variety defined over an algebraically closed field 
$k$ of characteristic zero. If $X_A$ is a model of $X$ defined over a subring $A$ of $k$, finitely generated over $\ZZ$,
then there is a dense set of closed points $S\subseteq \Spec\,A$ such that the Frobenius action on
$H^n(X_s,\cO_{X_s})$ is bijective for every $s\in S$. 
\end{conjecture}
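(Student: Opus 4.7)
The paper's stated result is the implication Conjecture \ref{conj1} $\Rightarrow$ Conjecture \ref{conj2}, so the plan is to assume Conjecture \ref{conj1} and deduce that for any given $X$ a dense set of reductions $X_s$ has Frobenius acting bijectively on $H^n(X_s,\cO_{X_s})$. The overall strategy is to encode this bijectivity of Frobenius on top cohomology as the equality of a multiplier ideal with the corresponding test ideal for a carefully engineered ideal $\fra$ on a smooth ambient variety, and then invoke Conjecture \ref{conj1} to turn that pointwise algebraic equivalence into a statement about a dense set of closed points.

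First I would fix a very ample line bundle $L$ on $X$ with sufficient positivity --- for instance, choose $L$ so that $L\otimes\omega_X^{-1}$ is ample and the section ring $R=\bigoplus_{m\geq 0}H^0(X,L^m)$ is generated in degree one --- and form the affine cone $Y=\Spec\,R$, which is normal of dimension $n+1$ with an isolated singularity at the vertex $o$. The structural fact that makes the cone useful is the graded decomposition $H^{n+1}_{\frm_o}(R)=\bigoplus_{m\in\ZZ}H^n(X,L^m)$: in positive characteristic the Frobenius on this local cohomology respects the grading, and its degree-zero component is exactly the Frobenius on $H^n(X_s,\cO_{X_s})$. Next, I would embed $Y\hookrightarrow\AAA^N$ using generators of $R$ in degree one, and let $\fra\subset\cO_{\AAA^N}$ be the defining ideal of $Y$.

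The technical heart of the argument, which I would import from \cite{MS} (or reprove by combining a log-resolution calculation with standard manipulations for $F$-singularities), is that for an appropriate exponent $\lambda_0$ --- a log canonical threshold or $F$-jumping number of $\fra$ controlled by the positivity of $L$ --- the containment $\tau(\fra_s^{\lambda_0})\subseteq \cJ(\AAA^N,\fra^{\lambda_0})_s$ is an equality if and only if the Frobenius map on $H^n(X_s,\cO_{X_s})$ is bijective; the obstruction to equality is precisely governed by the kernel and cokernel of Frobenius on this cohomology group, via Matlis duality applied to the graded description of $H^{n+1}_{\frm_o}(R)$. With this equivalence in hand the argument finishes immediately: Conjecture \ref{conj1} applied to $\fra$ produces a dense set $S\subseteq\Spec\,A$ of closed points where the multiplier and test ideals agree, and the equivalence then forces Frobenius on $H^n(X_s,\cO_{X_s})$ to be bijective for every $s\in S$, which is exactly Conjecture \ref{conj2}. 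The main obstacle I expect is precisely this translation step: one must choose $L$ and $\lambda_0$ so that the local-cohomology/duality calculation identifies Frobenius bijectivity on $H^n(X_s,\cO_{X_s})$ as the \emph{sole} obstruction to the ideal equality, which is the genuine content of the machinery developed in \cite{MS} and the place where the delicate positivity assumptions on $L$ get consumed.
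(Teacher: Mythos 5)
There is a genuine gap at what you yourself call the ``technical heart'' of the argument. You assert that for the defining ideal $\fra$ of the affine cone over $X$ there is an exponent $\lambda_0$ such that the equality $\cJ(\AAA^N,\fra^{\lambda_0})_s=\tau(\fra_s^{\lambda_0})$ is \emph{equivalent} to bijectivity of Frobenius on $H^n(X_s,\cO_{X_s})$, and that this can be imported from \cite{MS} or reproved by ``standard manipulations.'' It cannot: \cite{MS} proves the opposite implication (ordinarity $\Rightarrow$ ideal equality), and the mechanism that translates an ideal containment into a statement about the Frobenius action on cohomology is a Fedder-type local cohomology computation that is specific to \emph{principal} ideals. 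Concretely, for a hypersurface $(h)$ one has $\tau(h^{1-1/p})=(h^{p-1})^{[1/p]}$, whose annihilator in $E=H^{N+1}_{\frm}(R)$ is $\ker(h^{p-1}F_E)$, and the exact sequence $0\to R(-d)\to R\to R/(h)\to 0$ identifies $h^{p-1}F_E$ with the Frobenius on $H^N_{\frm}(R/(h))$; this is Proposition~\ref{key_ingredient} in the paper. For the non-principal cone ideal $\fra$ (of codimension $N-n-1$ in your $\AAA^N$) the test ideal $\tau(\fra^{\lambda})$ is computed from high powers $(\fra^{\lceil\lambda p^e\rceil})^{[1/p^e]}$, and there is no analogous clean duality relating it to the Frobenius action on $H^{n+1}_{\frm}(R/\fra)$; the phrase ``Matlis duality applied to the graded description'' does not supply such a statement. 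Since this equivalence is exactly the content you need, the proof as proposed does not go through.

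The paper's actual route is designed precisely to avoid this obstruction. It re-embeds $X$ so that $\fra$ is generated by quadrics with $r=N-n\ge n+1$, computes $\cJ(\AAA_k^{N+1},\fra^{\lambda})$ explicitly for $\lambda<r$ (Lemma~\ref{multiplier_ideals}), and then \emph{replaces $\fra$ by the principal ideal} $(h)$ with $h=g_1\cdots g_r$ a product of general quadric generators, using $\cJ(\fra^{\lambda})=\cJ(h^{\lambda/r})$ for $\lambda<r$. Conjecture~\ref{conj1} then yields $\frm^{2r-N-1}\subseteq\tau(h_s^{\lambda})$ for $\lambda<1$ on a dense set, the principal-ideal Fedder argument (Proposition~\ref{key_ingredient}) gives bijectivity of Frobenius on $H^{N-1}(D_s,\cO_{D_s})$, and a separate Koszul-complex argument (Lemma~\ref{lem_Frobenius_action}) descends this to $H^n(X_s,\cO_{X_s})$ through the complete intersection $D_1\cap\cdots\cap D_r\supseteq X_s$. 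If you want to salvage your cone-based approach, you would have to either establish a genuinely new duality statement for non-principal test ideals or perform an analogous reduction to the principal case, at which point you are essentially reconstructing the paper's argument.
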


It is expected, in fact, that there is a set $S$ as in Conjecture~\ref{conj2} such that 
$X_s$ is ordinary in the sense of Bloch and Kato \cite{BK} for every $s\in S$. In particular, this
would imply that the action of the Frobenius on each cohomology group $H^i(X_s,\cO_{X_s})$
is bijective (see \cite[Remark~5.1]{MS}). The main result of \cite{MS} is that Conjecture~\ref{conj2}
implies Conjecture~\ref{conj1}. In this note we show that the converse is true:

\begin{theorem}\label{main}
If Conjecture~\ref{conj1} holds, then so does Conjecture~\ref{conj2}.
\end{theorem}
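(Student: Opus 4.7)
The plan is to construct, for any smooth projective $n$-dimensional variety $X$ over $k$, an ambient smooth affine variety carrying an ideal $\fra$ to which Conjecture~\ref{conj1} can be applied, in such a way that the resulting multiplier/test ideal equality translates into bijectivity of Frobenius on $H^n(X_s, \cO_{X_s})$. The natural candidate is the affine cone over $X$ in a sufficiently positive projective embedding.

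First I would fix a very ample line bundle $L$ on $X$ satisfying $H^i(X, L^m) = 0$ for all $i > 0$ and $m \geq 1$ (Serre vanishing, after replacing $L$ by a high power), and, after a further such replacement, assume that the section ring $R = \bigoplus_{m \geq 0} H^0(X, L^m)$ is generated in degree one. Let $\hat Y = \Spec R$ be the affine cone, with unique singularity at the vertex $P$ defined by the irrelevant ideal $\frm$. A choice of generators of $H^0(X, L)$ embeds $\hat Y \hookrightarrow \AAA^{N+1}$; let $\fra \subset k[x_0,\dots,x_N]$ be its defining ideal. After possibly enlarging $A$, these data are defined over the finitely generated subring $A \subset k$, so that Conjecture~\ref{conj1} applies to $\fra$.

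The vanishing forces $R$ to be Cohen--Macaulay, with
\[
H^{n+1}_\frm(R) \cong \bigoplus_{m \in \ZZ} H^n(X, L^m)
\]
as graded modules, Frobenius-equivariantly (Frobenius multiplying degrees by $p$ and hence preserving the degree-zero summand $H^n(X, \cO_X)$). Consequently, if Frobenius is injective on $H^{n+1}_{\frm_s}(R_s)$, it is bijective on the finite-dimensional piece $H^n(X_s, \cO_{X_s})$, so it suffices to find a dense set of $s$ for which $\hat Y_s$ is F-injective at $P_s$. I would then invoke the classical descriptions, due to Lipman, Hara, Watanabe, and Smith, of the multiplier and test ideals of a codimension-$c$ subscheme of a smooth variety in terms of its (F-)singularities. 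For $c = N - n$ and an appropriate threshold $\lambda$ near $c$, the ideals $\cJ(\AAA^{N+1}, \fra^\lambda)$ and $\tau(\AAA^{N+1}_s, \fra_s^\lambda)$ admit explicit descriptions in terms of the blow-up $\tilde Y \to \hat Y$ (the total space of $L^{-1}$) and the Frobenius action on the graded cohomology $\bigoplus_m H^n(X_s, L_s^m)$, respectively. The equality of these ideals supplied by Conjecture~\ref{conj1} for $s$ in a dense set then forces Frobenius to be injective on each graded piece of $H^{n+1}_{\frm_s}(R_s)$, and in particular bijective on the degree-zero piece $H^n(X_s, \cO_{X_s})$, which is the content of Conjecture~\ref{conj2}.

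The main obstacle will be to carry out this middle translation carefully. The subtlety is that $\hat Y$ need \emph{not} have rational singularities in characteristic zero --- this fails exactly when $H^n(X, \cO_X) \neq 0$, which is precisely the interesting case. Thus the naive criterion ``multiplier ideal equals the ideal iff rational singularities'' is unavailable; one instead needs a refined comparison recording how the multiplier ideal of $\fra$ sees the whole graded structure of $H^{n+1}_\frm(R)$ (via Matlis duality with the canonical module of $R$) and how each reduction of the test ideal records the Frobenius action on that structure, so that coincidence of the two genuinely pins down injectivity of Frobenius on the relevant graded pieces. Secondary but routine issues include verifying that $R$, $\hat Y$, and $\fra$ spread out well over $\Spec A$, and that the chosen threshold $\lambda$ can in fact be taken at (or arbitrarily close to) the critical value $c = N-n$ in the statement of Conjecture~\ref{conj1}.
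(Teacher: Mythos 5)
Your set-up (the cone over $X$ in a sufficiently positive embedding, and the reduction of Conjecture~\ref{conj2} to Frobenius injectivity on the degree-zero part of $H^{n+1}_{\frm_s}(R_s)\cong\bigoplus_m H^n(X_s,L_s^m)$) matches the skeleton of the paper's argument, and the first and last steps are sound. One small inaccuracy: $R$ need not be Cohen--Macaulay, since $H^i_\frm(R)_0\simeq H^{i-1}(X,\cO_X)$ for $2\le i\le n$, and these intermediate cohomology groups cannot be killed by replacing $L$ with a power; this does not, however, affect the identification of the top local cohomology in degree zero, which is all you actually use.

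The genuine gap is the middle step, which you yourself flag as ``the main obstacle'': you assert that for $\lambda$ near $c=N-n$ the ideals $\cJ(\AAA^{N+1},\fra^\lambda)$ and $\tau(\AAA^{N+1}_{k(s)},\fra_s^\lambda)$ admit explicit descriptions in terms of the blow-up and of the Frobenius action on $\bigoplus_m H^n(X_s,L_s^m)$, so that their equality forces Frobenius injectivity on $H^{n+1}_{\frm_s}(R_s)$. No such description is available for a non-principal ideal $\fra$, and you give no argument for one. In fact, for $\lambda<c$ the multiplier ideal is simply a power of $\frm$ depending only on $N$, $\lambda$, and the degrees of the generators (the paper's Lemma~\ref{multiplier_ideals}); it carries no information about the graded structure of $H^{n+1}_\frm(R)$, so the Matlis-duality picture you invoke does not operate at these exponents. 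What is true, and what carries all the content, is that the containment $\frm^{2c-N-1}\subseteq\tau(\AAA^{N+1}_{k(s)},\fra_s^\lambda)$ forces Frobenius injectivity on the relevant cohomology --- but proving this is exactly the hard point. The paper circumvents it by first replacing $\fra$ by a \emph{principal} ideal: for general linear combinations $g_1,\dots,g_r$ of the quadric generators and $h=g_1\cdots g_r$ one has $\cJ(\fra^\lambda)=\cJ(h^{\lambda/r})$ for $\lambda<r$, and for a principal ideal the test ideal at exponent $1-\tfrac1p$ equals $(h^{p-1})^{[1/p]}$, whose annihilator in $E=H^{N+1}_\frm(k(s)[x_0,\dots,x_N])$ is $\Ker(h^{p-1}F_E)$; a Fedder-type computation (Proposition~\ref{key_ingredient}) then yields Frobenius injectivity on $H^{N-1}(D_s,\cO_{D_s})$ for the hypersurface $D_s=V(h_s)$, and a separate argument with the Koszul-type complex of the complete intersection $V(g_1,\dots,g_r)$ (Lemma~\ref{lem_Frobenius_action}) transports this to $H^{N-r}(X_s,\cO_{X_s})$. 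Your proposal contains none of this machinery, and without it (or a substitute, e.g.\ a worked-out adjoint-type theory for the non-principal $\fra$ together with a proof that it detects F-injectivity of the cone) the argument does not close.
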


The following is an outline of the proof. Given a variety $X$ as in Conjecture~\ref{conj2}, 
we embed it in a projective space $\PP_k^N$ such that $r:=N-n\geq n+1$, and the ideal $\fra
\subseteq k[x_0,\ldots,x_N]$ defining $X$ is generated by quadrics. In this case it is easy to 
compute the multiplier ideals $\cJ(\AAA_k^{N+1},\fra^{\lambda})$ for $\lambda<r$, and in particular we see that
$(x_0,\ldots,x_N)^{2r-N-1}\subseteq\cJ(\AAA_k^{N+1},\fra^{\lambda})$ for every $\lambda<r$.
It follows from a general property of multiplier ideals that
if $g_1,\ldots,g_r$ are general linear combinations of a system of generators of $\fra$, 
and if $h=g_1\cdots g_r$, then $\cJ(\AAA_k^{N+1}, \fra^{\lambda})=
\cJ(\AAA_k^{N+1}, h^{\lambda/r})$ for every $\lambda<r$. In this case, Conjecture~\ref{conj1}
implies that for a dense set of closed points $s\in\Spec\,A$, the ideal
$(x_0,\ldots,x_N)^{2r-N-1}$ is contained in $\tau(\AAA^{N+1}_{k(s)}, h_s^{\mu})$ for every $\mu<1$. Using some basic properties 
of test ideals, we deduce that the  Frobenius action on 
$H^{N-1}(D_s,\cO_{D_s})$ is bijective, where $D_s\subset\PP_{k(s)}^N$ is the hypersurface defined by $h_s$. We show that this in turn implies the bijectivity of the Frobenius action on 
$H^n(X_s,\cO_{X_s})$, hence proves the theorem.

\section{Proof of the main result}

We start by recalling the definition of multiplier ideals and test ideals. Suppose first that $Y$ is a smooth, irreducible variety
over an algebraically closed field $k$ of characteristic zero, and $\fra$ is a nonzero ideal on $Y$.
A \emph{log resolution} of $\fra$ is a projective, birational morphism $\pi\colon W\to Y$, with
$W$ smooth, such that $\fra\cdot\cO_W$ is the ideal of a divisor $D$ on $W$, with
$D+K_{W/Y}$ having simple normal crossings (as usual, $K_{W/Y}$ denotes the relative canonical divisor of $W$ over $Y$).
With this notation, for every $\lambda\in\RR_{\geq 0}$ we have
\begin{equation}\label{def_multiplier}
\cJ(Y,\fra^{\lambda})=\pi_*\cO_W(K_{W/Y}-\lfloor\lambda D\rfloor).
\end{equation}
Recall that if $E=\sum_ia_i E_i$ is a divisor with $\RR$-coefficients, then
$\lfloor E\rfloor=\sum_i\lfloor a_i\rfloor E_i$, where $\lfloor t\rfloor$ is the largest integer
$\leq t$.  It is a well-known fact that the above definition is independent of the choice 
of log resolution. For this and other basic facts about multiplier ideals, see
\cite[Chapter~9]{positivity}.

Suppose now that $Y=\Spec\,R$ 
is an affine smooth, irreducible scheme of finite type over a perfect field $L$
of positive characteristic $p$ (in the case of interest for us, $L$ will be a finite field). Under these assumptions, the test ideals admit the following simple description that we will use,
see \cite{BMS}. Recall that for an ideal $J$ and for $e\geq 1$, one denotes by $J^{[p^e]}$
the ideal $(h^{p^e}\mid h\in J)$. One can show that given an ideal $\frb$ in $R$, there is a unique
smallest ideal $J$ such that $\frb\subseteq J^{[p^e]}$; this ideal is denoted by 
$\frb^{[1/p^e]}$.
Suppose now that $\fra$ is an ideal in $R$ and $\lambda\in\RR_{\geq 0}$. One can show that for every $e\geq 1$ we have the inclusion 
$$(\fra^{\lceil \lambda p^e\rceil})^{[1/p^e]}\subseteq
(\fra^{\lceil \lambda p^{e+1}\rceil})^{[1/p^{e+1}]},$$
where $\lceil t\rceil$ denotes the smallest integer $\geq t$. Since $R$ is Noetherian, it follows
that $(\fra^{\lceil \lambda p^e\rceil})^{[1/p^e]}$ is constant for $e\gg 0$. This is the test ideal
$\tau(Y, \fra^{\lambda})$. For details and a discussion of basic properties of test ideals in this setting, we refer to \cite{BMS}. For a comparison of  general properties of multiplier ideals and test ideals,
see \cite{HY} and \cite{MY}.

If $\fra$ is an ideal in the polynomial ring $k[x_0,\ldots,x_N]$, where $k$ is a field of characteristic zero, a \emph{model} of $\fra$ over a subring $A$ of $k$, finitely generated over $\ZZ$, is an ideal
$\fra_A$ in $A[x_0,\ldots, x_N]$ such that $\fra_A\cdot k[x_0,\ldots,x_N]=\fra$. We can obtain
such a model by simply taking $A$ to contain all the coefficients of a finite system of generators of 
$\fra$. Of course, we may always replace $A$ by a larger ring with the same properties; in particular,
we may replace $A$ by a localization $A_a$ at a nonzero element $a\in A$.
If $s\in\Spec\,A$ and if $\fra_A$ is a model of $\fra$, then we obtain a corresponding ideal
$\fra_s$ in $k(s)[x_0,\ldots,x_N]$. Note that if $s$ is a closed point, then the residue field  $k(s)$
is a finite field. 

Suppose now that $X\subseteq\PP_k^{N}$ is a projective subscheme defined by the homogeneous ideal $\fra\subseteq k[x_0,\ldots,x_N]$. If $\fra_A\subseteq A[x_0,\ldots,x_N]$ is a model of $\fra$
over $A$, which we may assume homogeneous, then the subscheme $X_A$
of $\PP_A^{N}$ defined by $\fra_A$ is a model of $X$ over $A$. If $s\in\Spec\,A$, then the subscheme 
$X_s\subseteq\PP_{k(s)}^{N}$ is defined by the ideal $\fra_s$. We refer to \cite[\S 2.2]{MS}
for some of the standard facts about reduction to positive characteristic. We note that given $\fra$ as above,
we may consider simultaneously all the reductions $\cJ(\AAA_k^{N+1},\fra^{\lambda})_s$
for all $\lambda\in\RR_{\geq 0}$. This is due to the fact that for bounded $\lambda$ we only 
have to deal with finitely many ideals, while for $\lambda\gg 0$, the multiplier ideals are determined by the lower ones via a Skoda-type theorem (see \cite[\S 3.2]{MS} for details). 

We can now give the proof of our main result stated in Introduction.

\begin{proof}[Proof of Theorem~\ref{main}]
Let $X$ be a smooth, irreducible $n$-dimensional projective variety over an algebraically closed field $k$ of characteristic zero, with $n\geq 1$. It is clear that the assertion we need is independent of the model $X_A$ that we choose.
Consider a closed embedding $X\hookrightarrow \PP_k^{N}$. After replacing this by a composition with a 
$d$-uple Veronese embedding, for $d\gg 0$, we may assume that the saturated ideal 
$\fra\subset R=k[x_0,\ldots,x_N]$ defining $X$ is generated by homogeneous polynomials of degree
two (see \cite[Proposition~5]{ERT}). Furthermore, we may clearly assume that $r:=N-n\geq n+1$. Under these assumptions, it is easy to determine the multiplier ideals of $\fra$ of exponent $<r$.

\begin{lemma}\label{multiplier_ideals}
With the above notation, if $\frm=(x_0,\ldots,x_{N})$, then 
\[
\cJ(\AAA_k^{N+1}, \fra^{\lambda}) = \left\{
\begin{array}{cl}
R, & \text{if}\,\,\,0 \le \lambda < \frac{N+1}{2}; \\[2mm]
\frm^{\lfloor 2\lambda\rfloor -N}, & \text{if} \,\,\, \frac{N+1}{2} \le \lambda < r.
\end{array}\right.
\]
\end{lemma}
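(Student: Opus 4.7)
My plan is to build a log resolution of $\fra$ by two successive blow-ups and then read off the multiplier ideal from the formula \eqref{def_multiplier}. First blow up the origin, $\pi \colon Y \to \AAA^{N+1}_k$, with exceptional divisor $E \cong \PP^N$; then blow up the strict transform $\widetilde{C(X)}$ of the affine cone $C(X) = V(\fra)$ via $\sigma \colon Z \to Y$, with exceptional divisor $F$. Note that $\widetilde{C(X)}$ can be identified with the total space of $\cO_{\PP^N}(-1)|_X$, hence is smooth (since $X$ is), and meets $E$ transversally along $X \subset E$; so in local coordinates adapted to $X$ the final exceptional locus $\widetilde E + F$ has SNC. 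Write $\mu := \pi\sigma$.

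The crux is the factorization
\[
\fra \cdot \cO_Y \;=\; \cO_Y(-2E) \cdot I_{\widetilde{C(X)}},
\]
whence $\fra \cdot \cO_Z = \cO_Z(-2\widetilde E - F)$. The factor $\cO_Y(-2E)$ appears because $\fra$ is generated by \emph{quadrics} (each pulls back to $t^2 \widetilde q$ in the standard charts of $Y$), while the residual factor equals the full ideal $I_{\widetilde{C(X)}}$ because dehomogenizing a generating set of a saturated homogeneous ideal produces the ideal of the affine piece of the projective scheme. Combined with $K_{Z/\AAA^{N+1}_k} = N\widetilde E + (r-1)F$ (using $\sigma^* E = \widetilde E$ by transversality) and \eqref{def_multiplier}, this yields
\[
\cJ(\AAA^{N+1}_k, \fra^\lambda) \;=\; \mu_* \cO_Z\bigl((N-\lfloor 2\lambda\rfloor)\widetilde E + (r-1-\lfloor\lambda\rfloor)F\bigr).
\]

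For the two cases I note that both $\widetilde E$ and $F$ are $\mu$-exceptional, mapping respectively to the origin and to $C(X)$, which have codimension $N+1$ and $r \ge n+1 \ge 2$ in $\AAA^{N+1}_k$. When $0 \le \lambda < (N+1)/2$, both coefficients above are non-negative, the divisor is effective and $\mu$-exceptional, and the standard Hartogs argument on the smooth target gives pushforward equal to $R$. When $(N+1)/2 \le \lambda < r$, only the $\widetilde E$-coefficient is negative, equal to $-k$ with $k = \lfloor 2\lambda\rfloor - N \ge 1$: I first push forward by $\sigma$ using the projection formula together with $\sigma_* \cO_Z(bF) = \cO_Y$ for $b \ge 0$ (effective exceptional), obtaining $\cO_Y(-kE)$; then $\pi_* \cO_Y(-kE) = \frm^k$, because sections are rational functions on $\AAA^{N+1}_k$ regular away from the origin (hence everywhere, by Hartogs) with vanishing order $\ge k$ at the origin. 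I expect the main obstacle to be verifying the clean factorization of $\fra \cdot \cO_Y$: while each individual ingredient (quadric generators, saturated ideal of smooth $X$) is standard, one must check in every standard chart of $Y$ that the dehomogenizations really generate the full ideal of the strict transform, not merely a sub-ideal.
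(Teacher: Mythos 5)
Your proof is correct and follows essentially the same route as the paper: the same two blow-ups (origin, then the strict transform of the cone), the same factorization $\fra\cdot\cO_Y=\cO_Y(-2E)\cdot I_{\widetilde{C(X)}}$ coming from the quadric generators, and the same discrepancy bookkeeping, the only cosmetic difference being that you work directly with the composite log resolution while the paper splits the computation via the change-of-variables formula through the intermediate blow-up. The point you flag as the main obstacle (that the dehomogenized generators cut out the full strict transform) is exactly the step the paper also relies on, using that $\fra$ is the saturated ideal of the smooth scheme $X$.
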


\begin{proof}
Let us fix $\lambda\in\RR_{\geq 0}$, with $\lambda<r$.
We denote by $Z$ the subscheme of $\AAA_k^{N+1}$ defined by $\fra$.
Let $\phi\colon W\to\AAA_k^{N+1}$ be the blow-up of the origin, with exceptional divisor $E$. 
Since $\fra$ is generated by homogeneous polynomials of degree two, it follows that 
$\fra\cdot\cO_W=\cO_W(-2E)\cdot\widetilde{\fra}$, where $\widetilde{\fra}$ is the ideal defining the 
strict transform $\widetilde{Z}$ of $Z$ on $W$. We have $K_{W/\AAA_k^{N+1}}=NE$, hence the change of variable formula for multiplier ideals (see \cite[Theorem~9.2.33]{positivity}) implies
\begin{equation}\label{eq1_lem_multiplier_ideals}
\cJ(\AAA_k^{N+1},\fra^{\lambda})=\phi_*\left(\cJ(W,(\fra\cdot\cO_W)^{\lambda})\otimes
\cO_W(NE)\right).
\end{equation}

It is clear that $\widetilde{Z}$ is nonsingular over 
$\AAA_k^{N+1}\smallsetminus\{0\}$. Since $\widetilde{Z}\cap E\subseteq E\simeq\PP^{N}$ is isomorphic to the
scheme $X$, hence it is nonsingular, it follows that $\widetilde{Z}$ is nonsingular,
and $\widetilde{Z}$ and $E$ have simple normal crossings.
Let $\psi\colon \widetilde{W}\to W$ be the blow-up of $W$ along $\widetilde{Z}$, with exceptional divisor  $T$, and let $\widetilde{E}$ be the strict transform of $E$.
Note that
$\widetilde{W}$ is nonsingular, and $\widetilde{E}+T$ has simple normal crossings. We have 
$K_{\widetilde{W}/W}=(r-1)T$ and $\fra\cdot\cO_{\widetilde{W}}=\cO_{\widetilde{W}}(-2\widetilde{E}-T)$. Therefore $\psi$ is a log resolution of $\fra\cdot\cO_W$, and by definition we have
\begin{equation}\label{eq2_lem_multiplier_ideals}
\cJ(W,(\fra\cdot\cO_W)^{\lambda})=\psi_*(\cO_{\widetilde{W}}(-(\lfloor\lambda\rfloor-r+1)T-\lfloor
2\lambda\rfloor \widetilde{E})
=\cO_W(-\lfloor 2\lambda\rfloor E)
\end{equation}
(recall that $\lambda<r$).
The formula in the lemma follows from (\ref{eq1_lem_multiplier_ideals}), (\ref{eq2_lem_multiplier_ideals}), and the fact that $\phi_*(\cO_W(-iE))=\frm^i$ for every 
$i\in\ZZ_{\geq 0}$.
\end{proof}

Let $f_1,\ldots,f_m$ be a system of generators of $\fra$, with each $f_i$ homogeneous of degree
two. We fix $g_1,\ldots,g_r$ general linear combinations of the $f_i$ with coefficients in $k$, and put
$h=g_1\cdots g_r$. In this case, we have
\begin{equation}\label{eq1_main}
\cJ(\AAA_k^{N+1},\fra^{\lambda})=\cJ(\AAA_k^{N+1}, h^{\lambda/r})
\end{equation}
for every $\lambda<r$ (see \cite[Proposition 9.2.28]{positivity}).

Suppose now that $\fra_A$ and $h_A$ are homogeneous models of $\fra$, and respectively $h$, over $A$. Let 
$X_A, D_A\subset\PP_A^N$ be the projective schemes defined by $\fra_A$ and
$h_A$, respectively. 
Note that $g_1,\ldots,g_r$ being general linear combinations of the $f_i$, the subscheme
$V(g_1,\ldots,g_r)\subset\PP^N_k$ has pure codimension $r$. Therefore we may assume that
for every $s\in\Spec\,A$,  the scheme $V((g_1)_s,\ldots,(g_r)_s)$ has pure codimension $r$
in $\PP^N_{k(s)}$. 
We need to show that given models as above, there is a dense set of
closed points
$S\subset \Spec\,A$ such that the Frobenius action on
$H^n(X_s,\cO_{X_s})$ is bijective for every $s\in S$. The next lemma shows that in fact, it is enough to find
$S$ as above such that the Frobenius action on $H^{N-1}(D_s,\cO_{D_s})$ is bijective
for all $s\in S$.

\begin{lemma}\label{lem_Frobenius_action}
Let $L$ be a finite field, and $D_1,\ldots,D_r$ hypersurfaces in $\PP^N=\PP_L^{N}$, with
$r\leq N$, such that
the intersection scheme $Y=D_1\cap\ldots\cap D_r$ has pure codimension $r$ in
$\PP^{N}$.
If the Frobenius action on $H^{N-1}(D,\cO_D)$ is bijective, where $D=\sum_{i=1}^rD_i$, then
for every closed subscheme $X$ of $Y$,
the Frobenius action on $H^{N-r}(X,\cO_X)$ is bijective.
\end{lemma}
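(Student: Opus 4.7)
The plan is to use Serre duality to recast the Frobenius bijectivity as bijectivity of a Cartier operator on $H^0$ of a dualizing sheaf, identify the operators explicitly on polynomial spaces, and exploit a natural surjective restriction. First, since $L$ is perfect and all relevant cohomology groups are finite-dimensional $L$-vector spaces, a $p$-semilinear self-map is bijective iff surjective; combined with the Frobenius-equivariant surjection $H^{N-r}(Y,\cO_Y)\twoheadrightarrow H^{N-r}(X,\cO_X)$ coming from $0\to\cI\to\cO_Y\to\cO_X\to 0$ (the connecting $H^{N-r+1}$ vanishes as $\dim Y=N-r$), this reduces the claim to showing Frobenius is bijective on $H^{N-r}(Y,\cO_Y)$.

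Both $D$ and $Y$ are Gorenstein in $\PP^N$ (a Cartier divisor, respectively a complete intersection), and adjunction gives $\omega_D\cong\cO_D(\delta)$ and $\omega_Y\cong\cO_Y(\delta)$ with $\delta=\sum_i d_i-N-1$. Serre duality then translates the hypothesis and the goal into bijectivity of the Cartier operators $C_D$ on $H^0(D,\omega_D)$ and $C_Y$ on $H^0(Y,\omega_Y)$. Setting $S=L[x_0,\ldots,x_N]$ and $I=(f_1,\ldots,f_r)$, a direct computation---using $0\to\cO(-N-1)\to\cO(\delta)\to\cO_D(\delta)\to 0$ together with $H^1(\cO(-N-1))=0$ on one hand, and the Koszul resolution of $\cO_Y$ on the other---identifies $H^0(D,\omega_D)\cong S_\delta$ and $H^0(Y,\omega_Y)\cong [S/I]_\delta$, with the restriction being the natural quotient.

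The main obstacle is verifying that $C_D$ and $C_Y$ are given under these identifications by one and the same formula, namely $g\mapsto C_{\PP^N}\bigl((f_1\cdots f_r)^{p-1}\,g\bigr)$, where $C_{\PP^N}$ is the standard Cartier operator on $\PP^N$. I would derive this by unwinding the isomorphism $H^{N-1}(D,\cO_D)\cong H^N(\PP^N,\cO(-D))$ (and the analogous Koszul-derived identification for $Y$), and observing that in each case the Frobenius factors as the absolute Frobenius pullback on $\PP^N$ followed by the inclusion $\cO(-pD)\hookrightarrow\cO(-D)$, i.e.\ multiplication by $(f_1\cdots f_r)^{p-1}$; Serre-dualizing yields the common formula, and since $C_{\PP^N}(f_i^p\cdot h)=f_i\cdot C_{\PP^N}(h)$, this formula descends from $S_\delta$ to $[S/I]_\delta$, so that $C_Y$ is simply the reduction of $C_D$ modulo $I$.

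With Cartier compatibility in hand, the argument closes immediately: given any $[g]\in[S/I]_\delta$, lift to $g\in S_\delta$, use surjectivity of $C_D$ to find $\alpha\in S_\delta$ with $C_D(\alpha)=g$, and conclude $C_Y([\alpha])=[g]$. Thus $C_Y$ is surjective, hence bijective, completing the reduction.
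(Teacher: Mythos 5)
Your argument is correct in substance, but it takes a genuinely different --- in fact, Serre-dual --- route from the paper's. The paper resolves $\cO_Y$ by the exact complex $0\to\cO_D\to\bigoplus_{|J|=1}\cO_{D_J}\to\cdots\to\cO_Y\to 0$ (exactness coming from $Y$ being a complete intersection), breaks it into short exact sequences, and uses the vanishing $H^i(D_J,\cO_{D_J})=0$ for $0<i<N-|J|$ to produce a chain of Frobenius-equivariant injections $H^{N-r}(Y,\cO_Y)\hookrightarrow\cdots\hookrightarrow H^{N-1}(D,\cO_D)$; bijectivity then descends to this subobject and finally passes to the quotient $H^{N-r}(X,\cO_X)$, exactly as in your first paragraph. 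Your surjection $S_\delta\twoheadrightarrow[S/I]_\delta$ intertwining the two Cartier operators is precisely the Serre dual of the paper's composite injection, made explicit in polynomial coordinates. What the paper's version buys is that one never has to identify the Frobenius action concretely: only formal exactness, standard vanishing for complete intersections, and the semilinear-algebra lemma are used. What your version buys is explicitness: the common formula $g\mapsto C_{\PP^N}\bigl((f_1\cdots f_r)^{p-1}g\bigr)$ is the complete-intersection Fedder-type trace computation, and it meshes directly with the local cohomology description the paper uses in Proposition~\ref{key_ingredient}. The price is that this identification for $Y$ --- via the comparison map between the Koszul complexes on the $f_i$ and on the $f_i^p$, whose top component is multiplication by $\prod_i f_i^{p-1}$ --- is the real content of your proof and must be written out in full; your appeal to $C_{\PP^N}(f_i^p\cdot h)=f_i\cdot C_{\PP^N}(h)$ to see that the operator descends modulo $I$ is the right key point. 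One boundary case needs patching: when $r=N$ the ring $S/I$ has dimension $1$, hence depth $1$, so the map $[S/I]_\delta\to H^0(Y,\cO_Y(\delta))$ need not be surjective and your identification of $H^0(Y,\omega_Y)$ with $[S/I]_\delta$ can fail (e.g.\ $\delta<0$ while $Y\neq\emptyset$); treat this case separately, as the paper does, or note that it does not occur in the application, where $r\le N-1$.
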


\begin{proof}
If $r=N$, then $X$ is zero-dimensional, and the Frobenius action on 
$\Gamma(X,\cO_X)$ is bijective since $L$ is perfect.
Therefore from now on we may assume that $r\leq N-1$. 

For every subset $J\subseteq\{1,\ldots,r\}$, let $D_J=\bigcap_{j\in J}D_j$. 
By assumption, $Y$ is a complete intersection, hence 
there is an exact complex
$$\cC^{\bullet}:\,\,\,\,0\to \cC^0\overset{d^0}\to \cC^1\overset{d^1}\to\ldots\overset{d^{r-1}}\to \cC^r\to 0,$$
where $\cC^0=\cO_D$, and $\cC^m=\bigoplus_{|J|=m}\cO_{D_J}$ for $m\geq 1$. Note that 
we have a 
morphism of complexes $\cC^{\bullet}\to F_*(\cC^{\bullet})$, where $F$ is the absolute Frobenius morphism on $X$.
It follows that if we break-up $\cC^{\bullet}$
into short exact sequences, the maps in the corresponding long exact sequences for cohomology
are compatible with the Frobenius action.

Let $\cM^i={\rm Im}(d^i)$, hence $\cM^0\simeq \cC^0=\cO_D$ and $\cM^{r-1}=\cC^r=\cO_Y$.
Since each $D_J$ is a complete intersection in $\PP^N$, it follows that 
$H^i(D_J,\cO_{D_J})=0$ for every $i$ with $1\leq i<\dim(D_J)=N-|J|$.
We deduce that for every $i$ with $0\leq i\leq r-2$,
the short exact sequence
$$0\to \cM^{i}\to \cC^{i+1}\to\cM^{i+1}\to 0$$
gives an exact sequence 
$$0=H^{N-i-2}(\PP^{N}, \cC^{i+1})\to H^{N-i-2}(\PP^N, \cM^{i+1})\to H^{N-i-1}(\PP^N, \cM^i).$$
Therefore we have a sequence of injective maps
$$H^{N-r}(Y, \cO_Y)\hookrightarrow H^{N-r+1}(\PP^N, \cM^{r-2})\hookrightarrow\ldots
\hookrightarrow  H^{N-2}(\PP^N,\cM^1)\hookrightarrow H^{N-1}(D,\cO_D),$$
compatible with the Frobenius action. Since this action is bijective on $H^{N-1}(D,\cO_D)$
by hypothesis, it follows that it is bijective also on $H^{N-r}(Y,\cO_Y)$ 
(see, for example, \cite[Lemma~2.4]{MS}).

On the other hand, since $\dim(Y)=N-r$, the surjection $\cO_Y\to\cO_X$ induces a surjection
$H^{N-r}(Y,\cO_Y)\to H^{N-r}(X,\cO_X)$, compatible with the Frobenius action. As we have seen,
the Frobenius action is bijective on $H^{N-r}(Y,\cO_Y)$, hence on every quotient
(see \cite[Lemma~2.4]{MS}). This completes the proof of the lemma.
\end{proof}

Returning to the proof of Theorem~\ref{main}, we see that it is enough to show that 
there is a dense set of closed points $S\subset\Spec\,A$ such that
Frobenius acts bijectively on $H^{N-1}(D_s,\cO_{D_s})$ for $s\in S$.
We assume that Conjecture~\ref{conj1} holds, hence there is a 
dense set of closed points
$S\subset \Spec\,A$ such that $\tau(\AAA^{N+1}_{k(s)}, h_s^{\lambda})=
\cJ(\AAA_k^{N+1},h^{\lambda})_s$ for every $\lambda\in\RR_{\geq 0}$ and every $s\in S$. 
In particular, it follows from Lemma~\ref{multiplier_ideals} and (\ref{eq1_main}) 
that $(x_0,\ldots,x_N)^{2r-N-1}\subseteq \tau(\AAA_{k(s)}^{N+1}, h_s^{\lambda})$ for 
every $\lambda<1$. 
Since $\deg(h_s)=2r\geq (N+1)$,
 Proposition~\ref{key_ingredient} below implies that the Frobenius action on 
$H^{N-1}(D_s,\cO_{D_s})$ is bijective for all $s\in S$. As we have seen, this completes 
the proof of Theorem~\ref{main}.
\end{proof}

\begin{proposition}\label{key_ingredient}
Let $L$ be a perfect field of characteristic $p>0$, and $h\in R=L[x_0,\ldots,x_N]$  a homogeneous polynomial of degree $d\geq N+1$, with $N\geq 2$. If $(x_0,\ldots,x_N)^{d-N-1}\subseteq\tau(\AAA_L^{N+1},
h^{1-\frac{1}{p}})$, then the Frobenius action on 
$H^{N-1}(D,\cO_D)$ is bijective, where $D\subset\PP_L^{N}$ is the hypersurface defined by $h$.
\end{proposition}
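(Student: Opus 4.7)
The plan is to rewrite $H^{N-1}(D,\cO_D)$ as a graded piece of a local cohomology module on which Frobenius can be made explicit, dualize this to a map on a graded piece of $R=L[x_0,\ldots,x_N]$, and then identify the image of the dualized Frobenius with the degree-$(d-N-1)$ component of the ideal $(h^{p-1})^{[1/p]}$; the hypothesis on the test ideal will then force surjectivity, hence bijectivity.

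First, the standard sequence $0\to\cO_{\PP^N}(-d)\to\cO_{\PP^N}\to\cO_D\to 0$ together with the vanishing of $H^i(\PP^N,\cO_{\PP^N})$ for $i\ge 1$ gives $H^{N-1}(D,\cO_D)\cong H^N(\PP^N,\cO_{\PP^N}(-d))$, which is naturally the degree $-d$ piece $E_{-d}$ of $E:=H^{N+1}_\frm(R)$, with $\frm=(x_0,\ldots,x_N)$. A Fedder-type analysis, obtained by comparing $0\to R(-d)\xrightarrow{\cdot h}R\to R/h\to 0$ with its $p$-th power $0\to R(-dp)\xrightarrow{\cdot h^p}R\to R/h^p\to 0$ via the absolute Frobenius on $R$ and the surjection $R/h^p\twoheadrightarrow R/h$ (lifted by multiplication by $h^{p-1}$), shows that the Frobenius on $H^{N-1}(D,\cO_D)$ corresponds to the map $\Phi\colon E_{-d}\to E_{-d}$, $\eta\mapsto h^{p-1}\eta^p$, where the Frobenius on $E$ sends degree $k$ to degree $pk$.

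By graded local duality, $E_{-d}$ is $L$-dual to $R_{d-N-1}$, and $\Phi$ is adjoint to an $L$-linear endomorphism $\Phi^{*}\colon R_{d-N-1}\to R_{d-N-1}$ given explicitly by $\Phi^{*}(f)=\pi(h^{p-1}f)$, where $\pi\colon R\to R$ is the Frobenius trace that sends $g=\sum_{\alpha\in\{0,\ldots,p-1\}^{N+1}}g_\alpha^p\,x^\alpha$ (the unique Frobenius expansion) to $g_{(p-1,\ldots,p-1)}$. Using the Fedder-type identity $(h^{p-1})^{[1/p]}=\pi(h^{p-1}R)$ (which holds because $\pi$ extracts the Frobenius coefficients of any product $h^{p-1}r$, and these generate the ideal on the left) combined with a degree check, one sees that $\Phi^{*}(R_{d-N-1})$ equals the degree-$(d-N-1)$ component of the ideal $(h^{p-1})^{[1/p]}$.

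Finally, faithful flatness of Frobenius on $R$ yields $(h^{pm})^{[1/p^{e+1}]}=(h^m)^{[1/p^e]}$ for all $m,e$, so the increasing sequence $\{(h^{p^e-p^{e-1}})^{[1/p^e]}\}_{e\ge 1}$ is actually constant and equal to $(h^{p-1})^{[1/p]}$; thus $\tau(\AAA^{N+1}_L,h^{1-1/p})=(h^{p-1})^{[1/p]}$, and the hypothesis becomes $\frm^{d-N-1}\subseteq(h^{p-1})^{[1/p]}$. Passing to the degree $d-N-1$ part gives $R_{d-N-1}\subseteq\Phi^{*}(R_{d-N-1})$, so $\Phi^{*}$ is a surjective, hence bijective, endomorphism of the finite-dimensional $L$-vector space $R_{d-N-1}$; dualizing, $\Phi$ is bijective on $H^{N-1}(D,\cO_D)$. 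The main obstacle is the third step: pinning down $\Phi^{*}$ as a Cartier-trace operator and, crucially, matching its image with $((h^{p-1})^{[1/p]})_{d-N-1}$, which requires a careful compatibility check between the graded Frobenius decomposition of $R$ and the local duality pairing.
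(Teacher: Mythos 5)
Your argument is correct, and it is essentially the paper's proof presented in Matlis--dual form. The paper works directly in $E=H^{N+1}_{\frm}(R)$: it cites the identity ${\rm Ann}_E\bigl((h^{p-1})^{[1/p]}\bigr)=\Ker(h^{p-1}F_E)$, takes annihilators in the hypothesis $\frm^{d-N-1}\subseteq (h^{p-1})^{[1/p]}$ to conclude that $\Ker(h^{p-1}F_E)$ lives in degrees $\geq -d+1$, and hence that the Frobenius is injective on $H^{N-1}(D,\cO_D)\simeq H^N_{\frm}(R/(h))_0\hookrightarrow E_{-d}$. You instead pass to the dual space $R_{d-N-1}$ and prove surjectivity of the trace $f\mapsto\pi(h^{p-1}f)$ by identifying its image with $\bigl((h^{p-1})^{[1/p]}\bigr)_{d-N-1}$; this is the same duality, with the Fedder-type identity $(h^{p-1})^{[1/p]}=\pi(h^{p-1}R)$ playing the role of the annihilator correspondence. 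Both the adjointness of $\Phi$ and $\Phi^{*}$ under the residue pairing and the degree bookkeeping ($\pi$ carries degree $dp-N-1$ to degree $d-N-1$) check out. One correction: $\Phi^{*}$ is not $L$-linear but $p^{-1}$-linear, i.e.\ $\Phi^{*}(cf)=c^{1/p}\Phi^{*}(f)$; since $L$ is perfect its image is still an $L$-subspace, and surjectivity still forces bijectivity (regard $\Phi^{*}$ as a linear map from the Frobenius twist of $R_{d-N-1}$, which has the same dimension), so the conclusion is unaffected. The adjunction is likewise $\sigma$-twisted, $\langle\Phi(\eta),f\rangle=\langle\eta,\Phi^{*}(f)\rangle^{p}$, which is exactly what makes surjectivity of $\Phi^{*}$ dual to injectivity of $\Phi$.
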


\begin{proof}
In the case $d=N+1$, this is a reformulation of a well-known fact due to Fedder \cite{Fedder}.
We follow the argument from \cite[Proposition~2.16]{MTW}, that extends to our more general setting. 
It is enough to show that the Frobenius action on $H^{N-1}(D,\cO_D)$ is injective
(see \cite[\S 2.1]{MS}).

Note first that $\tau(\AAA_L^{N+1},
h^{1-\frac{1}{p}})=(h^{p-1})^{[1/p]}$ (see \cite[Lemma~2.1]{BMS1}), hence by assumption $\frm^{d-N-1}\subseteq (h^{p-1})^{[1/p]}$, where $\frm=(x_0,\ldots,x_N)$.
It is convenient to use the interpretation of the ideal $(h^{p-1})^{[1/p]}$ in terms of local cohomology.  
Let $E=H_{\frm}^{N+1}(R)$. Recall that this is a graded $R$-module, carrying a natural action of the Frobenius, that we denote by $F_E$. There is an isomorphism 
$$E\simeq R_{x_0\cdots x_N}/\sum_{i=0}^N R_{x_0\cdots\widehat{x_i}\cdots x_N}.$$
Via this isomorphism, $F_E$ is induced by the Frobenius morphism on $R_{x_0\cdots x_N}$.

The annihilator of $(h^{p-1})^{[1/p]}$ in $E$ is equal 
to ${\rm Ker}(h^{p-1}F_E)$ (see, for example, \cite[\S 2.3]{BMS}). Therefore
we have
\begin{equation}\label{eq1_key}
{\rm Ker}(h^{p-1}F_E)\subseteq {\rm Ann}_E(\frm^{d-N-1})=\bigoplus_{i\geq -d+1}E_i.
\end{equation}

On the other hand, the exact sequence 
$$0\to R(-d)\overset{h}\to R\to R/(h)\to 0$$
induces an isomorphism
$$H_{\frm}^N(R/(h))\simeq \{u\in E\mid hu=0\}(-d),$$
such that the Frobenius action on $H_{\frm}^N(R/(h))$ is given by
$h^{p-1}F_E$. 
Since $H^{N-1}(D,\cO_D)\simeq H_{\frm}^N(R/(h))_0\hookrightarrow
E_{-d}$, (\ref{eq1_key}) implies that the Frobenius action is injective on 
$H^{N-1}(D,\cO_D)$. This completes the proof of the proposition. 
\end{proof}

\begin{remark}
In the proof of Theorem~\ref{main} we only used the inclusion ``$\subseteq$" in Conjecture~\ref{conj1}. However, this is the interesting inclusion: the reverse one is known, see \cite{HY} or \cite[Proposition~4.2]{MS}.
It is more interesting that we only used 
Conjecture~\ref{conj1} when $Y=\AAA_k^{N+1}$, $\fra$ is principal and homogeneous, 
and $\lambda=1-
\frac{1}{p}$.  By combining Theorem~\ref{main} with the main result in \cite{MS}, we see that in order to prove Conjecture~\ref{conj1} in general, it is enough to consider the case when 
$Y=\AAA_k^n$, $\fra=(f)$ is principal and homogeneous, and show the following: if $\frb=\cJ(Y,\fra^{1-\epsilon})$
for $0<\epsilon\ll 1$, and if $f_A\in A[x_1,\ldots,x_n]$ is a model for $f$, then there is a dense set
of closed points $S\subset\Spec\,A$ such that
$$\frb_s\subseteq (f_s^{p-1})^{[1/p]}$$
for every $s\in S$, where $p={\rm char}(k(s))$.
\end{remark}

\providecommand{\bysame}{\leavevmode \hbox \o3em
{\hrulefill}\thinspace}

\end{document}